\def\N{\mathbb N}
\def\Z{\mathbb Z}
\def \F{\mathbb F}
\def\Fx{\mathbb{F}_q[x]}
\def\Fq{\mathbb{F}_q}
\def\Fp{\mathbb{F}_p}
\def\Fpn{\mathbb{F}_{p^n}}
\def\ord{\mathop{\rm ord}\nolimits}
\theoremstyle{plain}
\newtheorem{theorem}{Theorem}[section]
\newtheorem{lemma}[theorem]{Lemma}
\newtheorem{definition}[theorem]{Definition}
\newtheorem{corollary}[theorem]{Corollary}
\newtheorem{proposition}[theorem]{Proposition}
\newtheorem{remark}[theorem]{Remark}
\newtheorem{example}[theorem]{Example}
\def\qed{\hfill\hbox{$\square$}}
\DeclareMathOperator{\ind}{ind}
\theoremstyle{definition}
\author[J. Alves Oliveira]{Jos\'e Alves Oliveira}
\author[F. E. Brochero Mart\'{\i}nez]{F. E. Brochero Mart\'{\i}nez}
\address{
	Departamento de Matem\'{a}tica\\
	Universidade Federal de Minas Gerais\\
	UFMG\\
	Belo Horizonte, MG\\
	31270-901\\
	Brazil\\
}
\email{jose-alvesoliveira@hotmail.com}
\email{fbrocher@mat.ufmg.br }
\title{Permutation Polynomials with Carlitz Rank 2}
\keywords{Permutation Polynomial, Hermite Criteria, Carlitz Rank}
\date{\today
}
\subjclass[2000]{ }
\subjclass[2010]{12E20 (primary) and 11T30(secondary)}
\begin{document}
	
	\begin{abstract} Let $\Fq$ denote the finite field with $q$ elements. The Carlitz rank of a permutation polynomial is an important measure of complexity of a polynomial. In this paper we find a sharp lower bound  for the weight of any permutation polynomial with Carlitz rank 2,   improving  the bound found by G\'omez-P\'erez, Ostafe and Topuzo\u{g}lu in that case.
	\end{abstract}
	
	\maketitle
	
	\section{Introduction}
	Let $\Fq$ denote the finite field with $q$ elements. A polynomial $f(x)\in\Fx$ is called a \textit{permutation polynomial} over $\Fq$ if the map $a\mapsto f(a)$ permutes the elements of $\Fq$. Important early contributions to the general theory can be found in Hermite \cite{hermite1863fonctions} and Dickson \cite{dickson1896analytic}. Recently, the study of permutation polynomials has intensified by their applications in cryptography and coding theory \cite{dillon2004new,colbourn2004permutation,sun2003permutation,chu2002circular}, resulting in the emergence of many new classes of permutation polynomials. Polynomials with few non-null coefficients are of high interest for cryptography and they have been extensively explored (see \cite{masuda2009permutation,hou2015determination,hou2015determination2, li2017new}).
	
	Let $\alpha$ be a primitive element of $\Fq^*$ and let $s_n=f(\alpha^n)$ be a sequence of period $q-1$. Blahut's Theorem (see Theorem 10.4.29 in \cite{Panario}) states that the linear complexity of the sequence $\{s_n\}_{n\in\N}$ is the weight of $f$. Our aim in this paper is to prove that the weight of a polynomial is close to $q$ if its Carlitz rank is equal to $2$, providing then a lower bound for the linear complexity of the sequence $\{s_n\}_{n\in\N}$. Although the small Carlitz rank does not suggests this sequence for cryptography, the estimate that we present for the weight of $f$ suggest that the sequence $s_n$ is an attractive candidate for Monte-Carlo methods, see Theorem 10.4.87 (with $n=2(q-1)$) in \cite{Panario}.  
	
	Two polynomials $f(x),g(x)\in\Fx$ represent the same permutation of $\Fq$ if $f(a)=g(a)$ for all $a\in\Fq$, i.e. $f(x)\equiv g(x)\pmod{x^q-x}$.  Let $\mathcal{S}_q$ be the set of permutation polynomials of $\Fq$. It is well-known (see Carlitz \cite{carlitz1953permutations})  that $\mathcal{S}_q$ is generated by linear polynomials $ax+b$, with $a,b\in\Fq,a\neq0$, and $x^{q-2}$. Therefore any permutation polynomial $f$ of $\Fq$ can be represented by a polynomial of the form
	\begin{equation}\label{equa1} 
	\mathcal{P}_n(x) = (\dots((a_0x+a_1)^{q-2}+a_2)^{q-2}\ldots+a_n)^{q-2}+a_{n+1}
	\end{equation}
	with $a_1, a_{n+1}\in\Fq$ and $a_0, a_2,\dots, a_n\in\Fq^*$, i.e. $f(x)\equiv\mathcal{P}_n(x)\pmod{x^q-x}$. Indeed, the polynomial $f$ can be represented in more than one way  by  polynomials of the form \eqref{equa1} and then the following invariant of permutation polynomials was introduced in \cite{aksoy2009carlitz}.

	\begin{definition} Let $f$ be a permutation polynomial of\, $\Fq$. The smallest integer $n$ for which there exists a polynomial $\mathcal{P}_n(x)$ of the form \eqref{equa1} such that $f(x)\equiv\mathcal{P}_n(x)\pmod{x^q-x}$ is called Carlitz rank of $f$. We denote by $Crk(f)$ the Carlitz rank  of the polynomial $f$.
	\end{definition}

	Recently, many authors have been working on problems concerning Carlitz rank, e.g see~\cite{icsik2018carlitz,icsik2017complete,gomez2014carlitz}. For a survey of Carlitz rank, see \cite{topuzouglu2014carlitz}.	For a permutation polynomial  $f$ with $Crk(f)=n$, let $\mathcal{P}_n(x)$ be a polynomial  representation of  $f$ of the form  (\ref{equa1}). Since
	$$x^{q-2}=\begin{cases}
	x^{-1},&\text{ if }x\neq0;\\
	0,&\text{ if }x=0,
	\end{cases}$$
we can rewrite $\mathcal{P}_n(x)$  formally as
	\begin{equation}\label{equa2} 
	a_{n+1}+\frac{1}{\ \ a_n+\ \dots\ \cfrac{1}{\ \ a_2+\cfrac{1}{a_0x+a_1}\ \ }\ \ },
	\end{equation}\\[0.3em]
	and its  $n$th convergent as 
	\begin{equation}\label{equa3}  
	\mathcal{R}_n(x) = \frac{\alpha_{n+1}x+\beta_{n+1}}{\alpha_{n}x+\beta_{n}},
	\end{equation}
	where the sequences $\{\alpha_n\}$ and $\{\beta_n\}$ are inductively defined as   $\alpha_k=\alpha_{k-1}a_k+\alpha_{k-2}$ and $\beta_k=\beta_{k-1}a_k+\beta_{k-2}$ for $k\geq 2$ with $\alpha_0=0$, $\alpha_1=a_0$, $\beta_0=1$ and $\beta_1=a_1$.
	
	Let $\mathcal{O}_n$ denote the set of poles
	\begin{equation}\label{equa4}
	 \Bigg\{\frac{-\beta_i}{\alpha_i} : i=1 \ldots n\Bigg\}\subset\mathbb{P}^1(\Fq)=\Fq\cup\{\infty\}.
	\end{equation}
	We observe that  $\mathcal{P}_n(x)=\mathcal{R}_n(x)$ for all $x\in\Fq\backslash\mathcal{O}_n$ and therefore
	\begin{equation}\label{equa5}
	f(x) = \frac{\alpha_{n+1}x+\beta_{n+1}}{\alpha_{n}x+\beta_{n}}\text{ for all }x\in\Fq\backslash \mathcal{O}_n.
	\end{equation}
	
	In \cite{aksoy2009carlitz}  the authors show the following relation between  Carlitz rank  and the degree of a permutation polynomial. 
	
	\begin{proposition}\label{item29}Let $f(x)$ be a permutation polynomial of degree $d$ and Carlitz rank $n$. Then
		$$n\geq q-1-d.$$
	\end{proposition}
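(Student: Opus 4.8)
The plan is to prove the contrapositive-style inequality by analyzing how the rational function representation in (\ref{equa5}) forces the degree $d$ to be large relative to the number of poles.

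The plan is to turn the pointwise identity \eqref{equa5}, which says that $f$ agrees with its $n$th convergent $\mathcal{R}_n$ away from the poles $\mathcal{O}_n$, into a statement about the roots of a single polynomial. First I would clear the denominator in \eqref{equa5} and introduce
\[
g(x) = (\alpha_n x + \beta_n)\,f(x) - (\alpha_{n+1}x + \beta_{n+1}) \in \Fq[x].
\]
By \eqref{equa5}, every $x \in \Fq \setminus \mathcal{O}_n$ satisfies $(\alpha_n x + \beta_n)f(x) = \alpha_{n+1}x + \beta_{n+1}$, and is therefore a root of $g$.

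Next I would count these roots. By \eqref{equa4} we have $|\mathcal{O}_n| \leq n$, so at most $n$ elements of $\Fq$ are poles (one of them may be $\infty$). Hence $g$ vanishes on $\Fq \setminus \mathcal{O}_n$, a set containing at least $q - n$ elements of $\Fq$.

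It then remains to bound $\deg g$ and to verify that $g$ is not the zero polynomial, after which the fact that a nonzero polynomial over a field has at most $\deg g$ roots finishes the argument. Consider first the main case $\alpha_n \neq 0$ (note that $\alpha_0 = 0$, so this forces $n \geq 1$). Then $(\alpha_n x + \beta_n)f(x)$ has degree exactly $d + 1$ while the subtracted term is linear, so $\deg g = d + 1$; moreover $g \neq 0$, since $g \equiv 0$ would force $f$ to equal a quotient of two linear polynomials and hence be constant, contradicting that $f$ permutes $\Fq$. Comparing the two counts gives $q - n \leq d + 1$, that is $n \geq q - 1 - d$. When $\alpha_n = 0$ one has $\beta_n \neq 0$ and $\mathcal{R}_n$ is linear; here $\deg g = d$, and $g \neq 0$ because $f$, having $Crk(f) = n \geq 1$, is not equivalent to a linear polynomial, so the same count yields the stronger inequality $q - n \leq d$.

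I expect the delicate point to be precisely this last step: controlling $\deg g$ and, above all, excluding $g \equiv 0$. The degree computation is routine once one knows whether $\alpha_n$ vanishes, but the nonvanishing of $g$ is what makes the root count bite, since if $g$ were identically zero the $q - n$ roots would carry no information. The inputs that rescue the argument are that a permutation polynomial is non-constant (handling $\alpha_n \neq 0$) and that $Crk(f) \geq 1$ rules out the linear case (handling $\alpha_n = 0$).
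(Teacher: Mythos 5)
The paper never proves Proposition \ref{item29}: it is quoted from \cite{aksoy2009carlitz}, so there is no internal proof to compare against. Judged on its own, your argument is the natural one (and the standard one in the literature), and it is essentially correct: $g(x)=(\alpha_nx+\beta_n)f(x)-(\alpha_{n+1}x+\beta_{n+1})$ vanishes on $\Fq\setminus\mathcal{O}_n$, which contains at least $q-n$ field elements, and since $g\neq 0$ has degree at most $d+1$, this forces $q-n\leq d+1$.

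Two caveats, one cosmetic and one substantive. The cosmetic one: when $\alpha_n=0$ you assert $\beta_n\neq0$ without justification; this needs the determinant identity $\alpha_k\beta_{k-1}-\alpha_{k-1}\beta_k=(-1)^{k-1}a_0\neq 0$ (immediate by induction from the recursions defining $\alpha_k$ and $\beta_k$), which guarantees $(\alpha_n,\beta_n)\neq(0,0)$. The substantive one: in that same case you invoke ``$Crk(f)=n\geq1$'', a hypothesis that appears nowhere in the statement. You cannot avoid it, because the proposition as quoted is false for $n=0$: take $f(x)=x$, so $d=1$, $n=0$, and the claimed inequality $0\geq q-2$ fails for every $q\geq3$. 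The version of this result in the literature carries the hypothesis $\deg f\geq 2$ (equivalently, that $f$ is not linear, i.e.\ $n\geq1$), and your proof is precisely a correct proof of that amended statement: when $\alpha_n\neq0$ the hypothesis holds automatically (as you note, $\alpha_0=0$ forces $n\geq1$), and when $\alpha_n=0$ it is exactly what rules out $g\equiv0$. State the hypothesis $n\geq1$ up front rather than smuggling it into one branch; with that amendment the proof is complete.
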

	
	 For a polynomial $f\in\Fq[x]$, let $\omega(f)$ be the weight of $f$, i.e.  the number of non-null coefficients of $f$. The following important result,  shown  by G\'omez-P\'erez, Ostafe, and  Topuzo\u{g}lu,  relates  Carlitz rank with the weight of a permutation polynomial.
	
	\begin{theorem}\cite[Theorem $4$]{gomez2014carlitz}\label{item90}
		Let $f$ be a permutation polynomial of $\Fq$ with $\deg(f)\geq2$. Suppose that $f$ has a representation of the form 
		$$f(x)=\sum\limits_{i=1}^{\omega(f)}a_ix^{e_i},$$
where $f(x)\neq c_1+c_2x^{q-2}$  with $c_1,c_2\in\Fq, c_2\neq0$. Then
		$$ Crk(f) > \frac{q}{\omega(f)+2}-1.$$
	\end{theorem}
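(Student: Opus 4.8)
The plan is to exploit the rational description \eqref{equa5}: writing $n=Crk(f)$ and fixing a representation of the form \eqref{equa1} with this $n$, we have $f(x)=\mathcal{R}_n(x)=\frac{\alpha_{n+1}x+\beta_{n+1}}{\alpha_n x+\beta_n}$ for every $x\in\Fq\setminus\mathcal{O}_n$. First I would clear the denominator and set
$$g(x)=(\alpha_n x+\beta_n)f(x)-(\alpha_{n+1}x+\beta_{n+1}),$$
reduced modulo $x^q-x$ so that $\deg g\le q-1$. By \eqref{equa5}, $g$ vanishes on $\Fq\setminus\mathcal{O}_n$, a set of at least $q-n$ elements. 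If $g\equiv 0$, then $f$ agrees with the M\"obius map $\mathcal{R}_n$ on all of $\Fq$, which forces $Crk(f)\le 1$; the resulting cases $n\le 1$ (where $f$ is a shifted inverse $c+c'(x-x_0)^{q-2}$, dense when $x_0\ne 0$ and equal to the \emph{excluded} polynomial $c_1+c_2x^{q-2}$ when $x_0=0$) I would dispatch directly. So assume $n\ge 2$ and $g\ne 0$.

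Two properties of $g$ drive the argument. The first is a span estimate: $g$ has at least $q-n$ roots in $\Fq$, at most one of which is $0$, so after dividing out the lowest power of $x$ the quotient has at least $q-n-1$ nonzero roots; hence the difference between the largest and smallest exponents occurring in $g$ is at least $q-n-1$. The second is a structural/sparsity property: since $f=\sum a_ix^{e_i}$ has $\omega(f)$ terms, the support of $g$ lies in $\{0,1\}\cup\bigcup_i\{e_i,e_i+1\}$ (the term $x^q$ produced when some $e_i=q-1$ reduces into the block $\{0,1\}$), i.e.\ in at most $\omega(f)+1$ blocks of consecutive integers, each of length at most $2$.

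The crux is to bound the gaps \emph{between} these blocks. Let $P(x)=\prod_{a\in\mathcal{O}_n\cap\Fq}(x-a)$, a monic polynomial of degree $m\le n$. Since $g$ vanishes on $\Fq\setminus\mathcal{O}_n$ and $P$ vanishes on $\mathcal{O}_n\cap\Fq$, the product $gP$ vanishes on all of $\Fq$, so $x^q-x\mid gP$ and $gP=(x^q-x)h=x^q h-xh$ with $\deg h\le m-1$. Thus $gP$ has no terms with exponent in $[m+1,q-1]$; reading off the coefficient of $x^k$ there and using that $P$ is monic of degree $m$ gives a relation expressing each coefficient $g_{k-m}$ of $g$ through the $m$ coefficients immediately above it. The consequence I would extract is that $g$ cannot have $m$ consecutive zero coefficients in this range, since such a run would propagate downward through the relation and kill all lower coefficients, contradicting the span estimate. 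Hence consecutive blocks in the support of $g$ are separated by at most $m\le n$ missing exponents.

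Finally I would combine the three estimates. The total span of the support of $g$ is the sum of the within-block lengths (at most $\omega(f)+1$ blocks, each contributing at most $1$) and the between-block gaps (at most $\omega(f)$ of them, each of length at most $n$); matching this against the lower bound $q-n-1$ for the span yields, after simplification, $q\le(n+1)(\omega(f)+2)$, and the strict inequality $q<(n+1)(\omega(f)+2)$ (using $n\ge 1$), which is exactly $Crk(f)>\frac{q}{\omega(f)+2}-1$. I expect the main obstacle to be the gap lemma: proving rigorously that the recurrence forbids long runs of zero coefficients, and pinning down the gap bound precisely enough to obtain the sharp constant, together with the careful accounting of the boundary blocks (the block $\{0,1\}$ and the top block near $x^{q-1}$, where the recurrence range must be treated separately) and of the degenerate case $\alpha_n=0$, in which $\mathcal{R}_n$ is linear and the blocks are singletons.
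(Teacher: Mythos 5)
First, a point of reference: the paper does \emph{not} prove Theorem \ref{item90}; it is imported from \cite{gomez2014carlitz} and used as a black box, so your argument can only be judged on its own merits. Your architecture (clear the denominator, get $g$ with $\ge q-n$ roots, play the span of $\mathrm{supp}(g)$ against its block structure, control gaps via $x^q-x\mid gP$) is a genuinely viable elementary route, but it has one conceptual hole. It is the boundary at exponent $0$, which you flag but do not resolve, and which is not mere bookkeeping. Your recurrence $g_k=-(p_{m-1}g_{k+1}+\cdots+p_0g_{k+m})$ is the vanishing of the coefficient of $x^{k+m}$ in $gP$, valid only for $m+1\le k+m\le q-1$, i.e.\ $k\ge 1$; it can never force $g_0=0$, so nothing in your argument bounds the gap between exponent $0$ and the least positive support element $b$. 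This case is not vacuous: $g_0\neq 0$ happens exactly when $0\in\mathcal{O}_n$ (e.g.\ the rank-two representation $(x^{q-2}+a_2)^{q-2}+a_3$ has pole set $\{0,-a_2^{-1}\}$ and $g(0)=a_2^{-1}\neq 0$), and then your final count reads $q-n-1\le b+(\omega(f)+1)+\omega(f)\,n$ with $b$ unbounded, so no contradiction follows; the extreme example $g=x^{q-1}-1$ (coming from $f=x^{q-2}$) shows the bottom gap really can be of size $q-2$. The clean repair is to go multiplicative: only roots in $\Fq^*$ matter, so reduce exponents modulo $q-1$ and view the support cyclically in $\Z/(q-1)\Z$. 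Multiplying $g$ by a power of $x$ rotates the support without changing zeros in $\Fq^*$, so any cyclic gap of length $G$ can be rotated to the top, leaving a nonzero polynomial of degree $\le q-2-G$ with at least $q-n-1$ zeros in $\Fq^*$; hence \emph{every} cyclic gap, including the one through $0$, satisfies $G\le n-1$. The degenerate case $g\equiv 0\pmod{x^{q-1}-1}$ is precisely where the excluded polynomial $c_1+c_2x^{q-2}$ (or a linear $f$) lives, which is where the hypothesis enters.

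There is also a quantitative slip that matters. Your stated gap bound (blocks separated by at most $m\le n$ \emph{missing} exponents, i.e.\ jumps of size $\le n+1$) combined with your accounting gives
\begin{equation*}
q\ \le\ \omega(f)\,n+n+2\omega(f)+2\ =\ (n+1)(\omega(f)+2)+\bigl(\omega(f)-n\bigr),
\end{equation*}
which does \emph{not} yield $q<(n+1)(\omega(f)+2)$ when $\omega(f)>n$ --- and that is the relevant regime: arguing by contradiction one assumes $(n+1)(\omega(f)+2)\le q$, which allows $\omega(f)$ up to roughly $q/(n+1)$, far larger than $n$. What is needed, and what your propagation argument actually proves (a run of $m$ zeros at positions $\ge 1$ already kills the coefficient immediately below it), is jumps $\le m\le n$, i.e.\ at most $m-1$ missing exponents; then
\begin{equation*}
q\ \le\ (n+1)(\omega(f)+1)+1\ =\ (n+1)(\omega(f)+2)-n\ <\ (n+1)(\omega(f)+2)
\end{equation*}
since $n\ge1$ (equivalently, in the cyclic picture, $q-1\le 2(\omega(f)+1)+(\omega(f)+1)(n-1)=(\omega(f)+1)(n+1)$). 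So the sharp constant is within reach of your method, but as submitted the proof does not close: the bottom-gap case is unhandled and the stated gap bound is one worse than what the final inequality requires.
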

	
	For $Crk(f)=n$, this theorem entails that the weight of $f$ has a lower bound given by
	\begin{equation}\label{equa122}
	\omega(f)>\frac{q}{n+1}-2.
	\end{equation}

 In this paper, we improve this lower bound for the weight of a permutation polynomial $f$ in the case where the Carlitz rank of $f$ is $2$ (Theorem \ref{item34}). In particular, we prove that $\omega(f)\ge q-\tfrac{q}{p}+O(p^{1/2})$ provided $Crk(f)=2$.
	
	\section{ Preliminaries}
	
	In this section we provide some lemmas that will be used to prove our main results. Throughout this article, $\F_q$ denotes the finite field with $q$ elements, where $q$ is a power of a prime $p$. For any $a\in \F_q^*$, let $\ord_{\Fq}(a)$ be the order of $a$ in the cyclic group $\F_q^*$. 
	
\begin{lemma}\label{item30}	
	Let  $f(x)\in\Fx$ be a permutation polynomial of $\Fq$ with $Crk(f)=2$. Then there exist elements $a_0, a_1, a_2, a_3\in\Fq$, with $a_0\neq0$ and $a_2\neq 0$, such that		
	$$f(x)\equiv a_2^{-1}\sum\limits_{i=1}^{q-2}{x^i(-a_0)^i\big[(a_1-\!ia_2^{-1})(a_1+a_2^{-1})^{q-2-i}- a_1^{q-1-i}\big]}+c\pmod{x^q-x},$$
	where $c= a_3+a_2^{-1}\left[a_1(a_1+a_2^{-1})^{q-2}+1-a_1^{q-1}\right]$.	\end{lemma}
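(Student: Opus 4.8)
The plan is to read off the coefficients of the reduced representative of $\mathcal{P}_2(x)=\big((a_0x+a_1)^{q-2}+a_2\big)^{q-2}+a_3$. By the definition of Carlitz rank, such $a_0,a_2\in\Fq^*$ and $a_1,a_3\in\Fq$ exist with $f\equiv\mathcal{P}_2\pmod{x^q-x}$, so these are exactly the promised elements and the whole task is to expand $\mathcal{P}_2$. Writing $b=a_1+a_2^{-1}$ and using $y^{q-2}=y^{-1}$ for $y\neq0$ together with $0^{q-2}=0$, I first evaluate $\mathcal{P}_2$ pointwise: whenever $a_0c+a_1\neq0$ and $1+a_1a_2+a_0a_2c\neq0$ one gets
\[
\mathcal{P}_2(c)=a_3+\frac{a_0c+a_1}{\,a_0a_2c+1+a_1a_2\,}=a_3+a_2^{-1}-a_2^{-2}(a_0c+b)^{-1}.
\]

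The two points excluded above are $c_1=-a_1a_0^{-1}$ and $c_2=-ba_0^{-1}$, which are distinct because $a_2^{-1}\neq0$. A direct computation at these points gives $\mathcal{P}_2(c_1)=a_3+a_2^{-1}$ and $\mathcal{P}_2(c_2)=a_3$. I would then claim the global identity of functions on $\Fq$
\[
f(x)=a_3+a_2^{-1}-a_2^{-2}(a_0x+b)^{q-2}+a_2^{-1}\big[(x-c_2)^{q-1}-(x-c_1)^{q-1}\big],
\]
the two terms $(x-c_j)^{q-1}$ acting as the indicators that correct the generic formula exactly at $c_1$ and $c_2$; this is verified by checking the three cases (generic $c$, $c=c_1$, $c=c_2$) using the values just computed. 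Since the right-hand side is a polynomial of degree less than $q$, the identity holds modulo $x^q-x$.

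It remains to expand this expression and collect coefficients. Here I use $\binom{q-1}{i}\equiv(-1)^i$ and $\binom{q-2}{i}\equiv(-1)^i(i+1)\pmod p$ (the second following from the first by Pascal's rule), together with the fact that $a_0^{q-1}=1$, which turns $c_j^{\,q-1-i}$ into $(-a_0)^i b^{q-1-i}$ and $(-a_0)^i a_1^{q-1-i}$ and thereby removes all negative powers of $a_0$. Collecting the coefficient of $x^i$ for $1\le i\le q-2$ and using $b^{q-1-i}=(a_1+a_2^{-1})b^{q-2-i}$ to merge the contribution of $-a_2^{-2}(a_0x+b)^{q-2}$ with that of the correction terms consolidates everything into $a_2^{-1}(-a_0)^i\big[(a_1-ia_2^{-1})(a_1+a_2^{-1})^{q-2-i}-a_1^{q-1-i}\big]$; the degree $q-1$ contributions cancel, and the same bookkeeping at $i=0$ yields the stated constant $c$. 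The main obstacle is not any single hard estimate but getting the correction terms right: one must verify the function identity at the two exceptional points, where the naive rational formula breaks down, and then carry out the binomial consolidation carefully, the key simplification being $a_0^{q-1}=1$.
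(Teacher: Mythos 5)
Your proof is correct, and it follows the same overall strategy as the paper's: evaluate the Carlitz form pointwise as a fractional linear map with two exceptional points, convert this into a single polynomial congruence by adding correction terms supported at those points, and expand everything via the Lucas congruences $\binom{q-2}{i}\equiv(-1)^i(i+1)$ and $\binom{q-1}{i}\equiv(-1)^i \pmod p$. The concrete decomposition, however, is genuinely different. The paper first substitutes $x\mapsto a_0^{-1}x$, so that $a_0$ disappears from the algebra and is only reinstated at the very end, and it represents the rational part by the factored polynomial $a_2^{-1}(x+a_1)(x+a_1+a_2^{-1})^{q-2}+a_3$; this representative already takes the correct value $a_3$ at the pole $-(a_1+a_2^{-1})$, so Lagrange interpolation of the difference produces just one correction term, $a_2^{-1}\bigl(1-(x+a_1)^{q-1}\bigr)$. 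You instead keep $a_0$ throughout, split the rational part by partial fractions into $a_3+a_2^{-1}-a_2^{-2}(a_0x+b)^{q-2}$, and consequently need indicator corrections at both exceptional points, which you verify directly case by case rather than by interpolation. The trade-off: your version expands three pure binomial powers and never has to multiply a linear factor into a binomial sum (the paper's $(x+a_1)\sum_i\binom{q-2}{i}x^i\eta^{q-2-i}$ step), but it pays with the $a_0^{q-1}=1$ bookkeeping and a second correction term. Both routes land on the same coefficient identity; your verification of the three pointwise cases, the cancellation of the degree $q-1$ terms, and the consolidation via $b^{q-1-i}=(a_1+a_2^{-1})b^{q-2-i}$ are all sound.
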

\begin{proof}
		By definition of Carlitz rank, there exist $a_0, a_1, a_2, a_3\in\Fq$ with $a_0\neq0$ and $a_2\neq 0$ such that $f(x)=((a_0x+a_1)^{q-2}+a_2)^{q-2}+a_3$. By Equation \eqref{equa5}, we have that 
$f\big(a_0^{-1}x\big)\equiv g(x) \pmod {x^q-x}$, where
		\begin{equation}\label{item66}
		g(x)=\begin{cases}
		\ \mathcal{R}_2(x)=\dfrac{x+a_1}{a_2x+a_1a_2+1}+a_3,& \text{ if }x \notin \{-a_1, -a_1-a_2^{-1}\};\\[.9em]
		\ a_2^{-1}+a_3,& \text{ if }x = -a_1;\\[.5em]
		\ a_3,& \text{ if }x = -(a_1+a_2^{-1}).
		\end{cases}
		\end{equation}
		
		Moreover, if $x \neq -(a_1+a_2^{-1})$ it follows that
		\begin{equation}\label{item21}
		\begin{aligned}
		\dfrac{x+a_1}{a_2x+a_1a_2+1}+a_3 &\equiv a_2^{-1}(x+a_1)\big(x+a_1+a_2^{-1}\big)^{q-2}+a_3\pmod{x^q-x}\\[.5em]
		&= a_2^{-1}(x+a_1)\sum\limits_{i=0}^{q-2}{{q-2 \choose i}x^i\big(a_1+a_2^{-1}\big)^{q-2-i}}+a_3=:\overline{\mathcal{R}}_2(x).
		\end{aligned}
		\end{equation}
		
		Therefore $f\big(a_0^{-1}x\big)-\overline{\mathcal{R}}_2(x)=0$ for each element  $x\in\Fq\backslash\{-a_1, -a_1-a_2^{-1}\}$. On the other hand, using the Lagrange's Interpolation Method, the polynomial $f\big(a_0^{-1}x\big)-\overline{\mathcal{R}}_2(x)$ can  be written  as
		\begin{equation}\label{item20}
		\begin{aligned} f\big(a_0^{-1}x\big)-\overline{\mathcal{R}}_2(x)&\equiv \sum\limits_{a\in\Fq}{\big[f(a_0^{-1}a)-\overline{\mathcal{R}}_2(a)\big](1-(x-a)^{q-1})}\pmod{x^q-x}\\
		&=(1-(x+a_1)^{q-1})a_2^{-1}+(1-(x+a_1+a_2^{-1})^{q-1})\cdot 0 \\
		&=(1-(x+a_1)^{q-1})a_2^{-1}.\\		
		\end{aligned}
		\end{equation}
		By Eq.~\eqref{item21} and \eqref{item20}, we have that
		$$\begin{aligned} f\big(a_0^{-1}x\big)&=\left(\tfrac{x+a_1}{a_2}\right)\sum\limits_{i=0}^{q-2}{{q-2 \choose i}x^i(a_1+a_2^{-1})^{q-2-i}}+a_3+(1-(x+a_1)^{q-1})a_2^{-1}\\
		&=\left(\tfrac{x+a_1}{a_2}\right)\sum\limits_{i=0}^{q-2}{{q-2 \choose i}x^i\eta^{q-2-i}}+a_3+a_2^{-1}-a_2^{-1}\sum\limits_{i=0}^{q-1}{{q-1 \choose i}x^i a_1^{q-1-i}},\\
		\end{aligned}$$
		where $\eta:=a_1+a_2^{-1}$. From Lucas' congruence it follows that
		$$\begin{aligned} f\big(a_0^{-1}x\big)&\equiv \left(\tfrac{x+a_1}{a_2}\right)\sum\limits_{i=0}^{q-2}{(i+1) (-x)^i\eta^{q-2-i}}+a_3+a_2^{-1}-a_2^{-1}\sum\limits_{i=0}^{q-1}{(-x)^i a_1^{q-1-i}}\hskip-0.4cm\pmod{x^q-x}\\
		&=a_2^{-1}\sum\limits_{i=1}^{q-2}(-x)^i\left[(a_1-ia_2^{-1})\eta^{q-2-i}- a_1^{q-1-i}\right]+c,	\\		
		\end{aligned}$$
		where $c:= a_3+a_2^{-1}\left[a_1(a_1+a_2^{-1})^{q-2}+1-a_1^{q-1}\right]$. Therefore
		$$f(x)\equiv a_2^{-1}\sum\limits_{i=1}^{q-2}{(-a_0x)^i\left[(a_1-ia_2^{-1})(a_1+a_2^{-1})^{q-2-i}- a_1^{q-1-i}\right]}+c\pmod{x^q-x},$$
	from where our result follows. $\hfill\qed$
	\end{proof}

\begin{remark}\label{item40}
	For a polynomial $f$ with Carlitz rank $2$, let $a_0$ and $c$ be as defined in Lemma~\ref{item30}. Since our goal in this paper is to present a lower bound for the weight of $f$, we can assume without loss of generality that $a_0=-1$ and $c=0$.
\end{remark}

\begin{remark}\label{item39}
	We recall that the needed for $a_2$ being non-null in Lemma~\ref{item30} follows from the definition of Carlitz rank. Otherwise, the polynomial $f$ actually has Carlitz rank $1$.
\end{remark}

Using the last lemma  we see that it is necessary  to determine the elements $a_1$ and $a_2$ for which the relation  
\begin{equation}\label{item17}
(a_1-ia_2^{-1})(a_1+a_2^{-1})^{q-2-i} - a_1^{q-1-i}=0
\end{equation}
 has  the largest  number of solutions $i$ with $ 1\leq i\leq q-2$. If either $a_1+a_2^{-1}= 0$ or $a_1= 0$, then it is easy to compute the exact number of solutions of the Equation~\eqref{item17}. If $a_1+a_2^{-1}\neq 0$ and $a_1\neq 0$, then we want to estimate the number os solutions of the equation
	\begin{equation}\label{equa597}
	a_1-ia_2^{-1}=\left(\frac{a_1+a_2^{-1}}{a_1}\right)^i(a_1+a_2^{-1}).
	\end{equation}
	
	In the following results we provide the necessary theory to obtain an upper bound on the number of solutions of \eqref{equa597}.

	\begin{lemma}\label{item69}
		Let $\Omega$ be a set and  let $g_1,g_2:\Z\rightarrow\Omega$ be periodic functions with period $n_1$ and $n_2$ respectively. For $u\in\Omega$, set $m_{i}(u)=|\{j\in[1,n_i]:g_i(j)=u\}|$ with $i\in\{1,2\}$. If $\gcd(n_1,n_2)=1$, then
		$$ \left|\left\{i\in[1, n_1 n_2]: g_1(i)=g_2(i)\right\}\right| = \sum_{u\in\Omega} m_1(u)m_2(u).$$
	\end{lemma}
	\begin{proof}We observe that there exist $m_1(u)$ integers $j_1\in[1,n_1]$ such that $g_1(j_1)=u$. Similarly, there exist $m_2(u)$ integers $j_2\in[1,n_2]$ such that $g_2(j_2)=u$. Therefore, there exist $m_1(u)m_2(u)$ pairs $(i_1,i_2)\in[1,n_1]\times[1,n_2]$ such $g_1(i_1)=g_2(i_2)=u$. Furthermore, if $j_1\in[1,n_1]$ and $j_2\in [1,n_2]$, then by the Chinese Remainder Theorem there exists an unique $j\in[1,n_1n_2]$ such that $j\equiv j_1\pmod{n_1}$ and $j\equiv j_2\pmod{n_2}$. Therefore, there exist exactly $m_1(u)m_2(u)$ values $j\in[1,n_1n_2]$ such that $j\equiv i_1\pmod{n_1}$, $j\equiv i_2\pmod{n_2}$ and $g_1(j)=g_2(j)=u$. Set
		$$d_u:=|\{i\in[1, n_1n_2]: g_1(i)=g_2(i)=u\}| = m_1(u)m_2(u).$$
		Our result follows by noting that $\left|\left\{i\in[1, n_1 n_2]: g_1(i)=g_2(i)\right\}\right|=\sum_{u\in\Omega} d_u$. $\hfill\qed$ 
		
	\end{proof}

	\begin{corollary}\label{item31}
		Let $\Omega$ be a set and $l,k$ be nonnegative  integers. Let $g_1,g_2:\Z\rightarrow\Omega$ be periodic functions with period $n_1$ and $n_2$, respectively. If $g_1|_{[1,n_1]}$ and $g_2|_{[1,n_2]}$ are injective functions and  $\gcd(n_1,n_2)=1$, then
		$$ |\{i\in[k+1, k+ln_1n_2]: g_1(i)=g_2(i)\}| \leq l \times\min\{n_1,n_2\}.$$
	\end{corollary}

An approach on the number of solutions of the Equation~\eqref{equa597} in the case where $q=p$ can be found in Theorem 1 (with $n=1$) of Coppersmith and Shparlinski~\cite{coppersmith2000polynomial}, where the authors estimate the number of solutions of $\ind(x)\equiv f(x)\pmod{p}$ with $f\in\Fq[x]$ and $n=\deg(f)$, where $\ind(x)$ denotes the index of $x$ with respect to a fixed primitive element of $\Fq^*$. In our case, the bound presented in \cite{coppersmith2000polynomial} implies that the number of solutions of the Equation~\eqref{equa597} is bounded by $\sqrt{2p-31/4}+1/2$. The following result yields a tighter bound for the case where $n=1$. Furthermore, we extend the result to the case where $q=p^t$ and $\ind(x)$ is inside a box whose size is bounded by $p$, improving Coppersmith and Shparlinski's result in the case $n=1$.
	
\begin{lemma}\label{item32}
Let $\gamma, c, d\in\Fq$ with $c\neq0$. If $L$ and $M$ are integers such that $3\leq M\leq p$, then
		$$|\{L\leq i\leq L+M: \gamma^{i+1}= ic+d \}|\leq \sqrt{\frac{3M}{2}-\frac{39}{16}}+\frac{5}{4}.$$
\end{lemma}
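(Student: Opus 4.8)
The plan is to convert the mixed ``exponential equals linear'' equation into a purely additive condition on the solution set, and then to exploit a Sidon-type (distinct-differences) property that comes from the injectivity of the linear side. Write
\[
S=\{L\le i\le L+M:\gamma^{i+1}=ic+d\}=\{i_1<\cdots<i_N\}.
\]
We may assume $\gamma\notin\{0,1\}$, since otherwise the left-hand side is constant and $S$ is contained in the zero set of a nonconstant affine function of $i$, forcing $N\le2\le$ (the right-hand side, for $M\ge3$). The first step is to subtract the defining equations of two solutions $i$ and $i+t$: from $\gamma^{i+1}=ic+d$ and $\gamma^{i+t+1}=(i+t)c+d$ one obtains
\[
\gamma^{i+1}\bigl(\gamma^{t}-1\bigr)=tc .
\]
This is the central identity, as it encodes the whole pair $(i,i+t)$ through the single quantity $\gamma^{i+1}$.

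The key step is to deduce that distinct solution pairs produce distinct gaps. Fix $t$ with $1\le t\le M$ and $\gamma^{t}\neq1$. For such a $t$ the identity above pins down $\gamma^{i+1}=tc/(\gamma^{t}-1)$, a fixed element of $\Fq$; combining this with the solution equation $ic+d=\gamma^{i+1}$ gives $ic=tc/(\gamma^{t}-1)-d$, which determines the residue $i\bmod p$ uniquely because $c\neq0$. Since $M\le p$, the reduction $i\mapsto i\bmod p$ is injective on a block of $M+1$ consecutive integers (with a single possible coincidence at the two endpoints when $M=p$), so at most one solution $i$ in the window can have a partner at distance $t$. In other words, apart from a bounded list of exceptional distances — those $t$ with $\ord_{\Fq}(\gamma)\mid t$, together with the at most one value $t\equiv0\pmod p$ — the pairwise differences $i_k-i_j$ are pairwise distinct. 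I expect this distinctness to be the true engine behind the improvement over the cruder counting that yields the Coppersmith--Shparlinski constant.

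With distinctness in hand I would finish by a counting/summation estimate. The consecutive gaps $d_j=i_{j+1}-i_j$ are then distinct positive integers whose sum is $i_N-i_1\le M$; since distinct positive integers sum to at least $1+2+\cdots+(N-1)=\binom{N}{2}$, this already gives the soft bound $\binom{N}{2}\le M+O(1)$, i.e.\ $N\le\sqrt{2M}\,(1+o(1))$, matching the strength of \cite{coppersmith2000polynomial}. To reach the sharper constant $\tfrac32$ one must show that the gaps are genuinely more spread out than a minimal distinct set would allow, which forces $i_N-i_1\ge\tfrac23N^{2}-\tfrac53N+\tfrac83$; solving this quadratic inequality for $N$ and absorbing the lower-order corrections yields the stated bound $N\le\sqrt{\tfrac{3M}{2}-\tfrac{39}{16}}+\tfrac54$.

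The hard part will be the sharp bookkeeping that turns the soft $\sqrt{2M}$ estimate into the exact constants, and in particular the improvement of the leading constant from $2$ down to $\tfrac32$. I anticipate two delicate points. First, the contribution of the exceptional distances (the multiples of $\ord_{\Fq}(\gamma)$ and the boundary case $M=p$) must be controlled by a small fixed number of extra pairs and folded into the $-\tfrac{39}{16}$ and $+\tfrac54$ terms. Second, and more substantially, ruling out the minimal gap pattern $\{1,2,\dots,N-1\}$ seems to require using the central identity a second time: for instance, tracking the two-step distances $i_{j+2}-i_j$ alongside the gaps $i_{j+1}-i_j$ and noting that the relation for these also forbids collisions, so that the solutions occupy a strictly larger span than a bare difference set. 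Optimizing the resulting discrete inequality is exactly what should pin the constants, and is where I expect the main effort to lie.
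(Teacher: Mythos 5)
Your two structural ingredients are exactly the paper's: the central identity $\gamma^{i+1}(\gamma^{t}-1)=tc$ obtained by subtracting the equations for two solutions, and the Sidon-type conclusion that pairs of solutions at equal distance must coincide (the paper proves it the same way you do, by noting that equal distances force $i_1c+d=j_1c+d$ and hence $i_1=j_1$). Those parts of your argument are correct. The genuine gap is that the proof is never finished: the span inequality $i_N-i_1\ge\tfrac23N^{2}-\tfrac53N+\tfrac83$ is only asserted as what ``must'' be shown, and the passage from the soft bound $\binom{N}{2}\le M+O(1)$ to the constant $\tfrac32$ is described as an optimization you ``expect'' to work. As written, the only estimate you have actually established is $N\lesssim\sqrt{2M}$, which is weaker than the statement being proved.

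The missing step, however, is not the hard part you predict it to be, and it requires neither a ``second use'' of the central identity nor any ruling out of the minimal gap pattern. Since \emph{all} pairwise differences of solutions are distinct (which you already have: the two-step distances $i_{j+2}-i_j$ are themselves differences of solution pairs, hence automatically distinct from each other and from the consecutive gaps), one simply lists the $N-1$ consecutive gaps, the odd-indexed two-step distances $i_3-i_1,i_5-i_3,\dots$, and the even-indexed ones $i_4-i_2,i_6-i_4,\dots$: in total $2N-3$ distinct positive integers. Each of the three families telescopes, so their total sum is at most $M_1+M_2\le(M-1)+(2M-4)=3M-5$; comparing with $1+2+\cdots+(2N-3)=\tfrac{(2N-3)(2N-2)}{2}$ and solving the quadratic gives exactly $N\le\sqrt{3M/2-39/16}+5/4$. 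This two-line telescoping count is precisely the paper's finish. Note also that the ``exceptional distances'' you propose to fold into the constants do not exist: if $\ord_{\Fq}(\gamma)\mid t$ for two solutions at distance $t$, the identity forces $tc=0$, i.e.\ $p\mid t$, so $t=p$ (as $0<t\le M\le p$), and then $\ord_{\Fq}(\gamma)\mid p$ together with $\ord_{\Fq}(\gamma)\mid q-1$ forces $\gamma=1$, which is excluded; likewise $p\mid t$ forces $\gamma^{t}=1$ and the same contradiction. So the distinct-difference property holds for all pairs with no exceptions, and no correction terms are needed.
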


	\begin{proof}
		For $\gamma\in\{0,1\}$ the inequality is trivial. Let $\gamma\in\Fq\backslash\{0,1\}$ and define $$\mathscr{C}_{\gamma}=\{L\leq i\leq L+M:\ \gamma^{i+1}= ic+d\},\quad  t=|\mathscr{C}_{\gamma}|\quad\text{and}\quad l=ord_{\Fq}(\gamma).$$ 
		
Suppose that  $i_1, i_2$ are distinct elements of  $\mathscr{C}_{\gamma}$ with  $\gamma^{i_1+1}= i_1c+d$ and $\gamma^{i_2+1}= i_2c+d$. These two equations entails that
		$$\gamma^{i_2+1}-\gamma^{i_1+1} = i_2c-i_1c$$
		$$\gamma^{i_2+1}-\gamma^{i_1+1} = (i_2-i_1)c$$
		$$\gamma^{i_1+1}(\gamma^{i_2-i_1}-1) = (i_2-i_1)c.$$
		We observe that if $l|(i_2-i_1)$ then $(i_2-i_1)c=0$ and therefore  $i_1=i_2$,  which is a contradiction. Then   $i_1\not\equiv i_2\pmod{l}$  and 
		$$\gamma^{i_1+1} = (i_2-i_1)\frac{c}{\gamma^{i_2-i_1}-1}.$$
		Now, suppose that there exists elements $j_1, j_2\in \mathscr{C}_{\gamma}$ with $j_1\neq j_2$ such that $j_2-j_1=i_2-i_1$. Then 
		$$j_1c+d = \gamma^{j_1+1} = (j_2-j_1)\frac{c}{\gamma^{j_2-j_1}-1} = (i_2-i_1)\frac{c}{\gamma^{i_2-i_1}-1} = \gamma^{i_1+1} = i_1c+d.$$
		
		Since $c\neq 0$, it follows that $j_1=i_1$ and $j_2=i_2$. Therefore, the difference between two distinct pairs of elements in $\mathscr{C}_{\gamma}$ is never the same. In particular, if $\mathscr{C}_{\gamma}=\{i_1< \dots <i_{t}\}$, then the values
		$$(i_2-i_1),(i_3-i_2),\dots,(i_t-i_{t-1}),$$
		$$(i_3-i_1),(i_5-i_3),\dots,(i_{2\lfloor \frac{t-1}{2}\rfloor+1}-i_{2\lfloor \frac{t-1}{2}\rfloor-1}),$$
		$$(i_4-i_2),(i_6-i_4),\dots,(i_{2\lfloor \frac{t}{2}\rfloor}-i_{2\lfloor \frac{t}{2}\rfloor-2})$$
		are all distinct. The number of values in the list above is $2t-3$. Furthermore,
		$$M_1:=(i_2-i_1)+(i_3-i_2)+\dots+(i_t-i_{t-1})\leq M-1,$$
		$$M_2:=(i_3-i_1)+\dots+(i_{2\lfloor \frac{t-1}{2}\rfloor+1}-i_{2\lfloor \frac{t-1}{2}\rfloor-1})+(i_4-i_2)+\dots+(i_{2\lfloor \frac{t}{2}\rfloor}-i_{2\lfloor \frac{t}{2}\rfloor-2})\leq 2M-4.$$
		We have that
		$$ \frac{(2t-3)(2t-2)}{2}=1+2+\cdots +(2t-3)\leq M_1+M_2\leq 3M-5,$$
		it follows that
		$$ t\leq \sqrt{\frac{3M}{2}-\frac{39}{16}}+\frac{5}{4}.$$$\hfill\qed$ 
		
	\end{proof}
Indeed, we do not know a sharp version for this result. We checked a possible bound using  a computer and we conjecture  that there exists a constant $k>0$ such that 
	$$|\{1\leq i\leq p-2: \gamma^{i+1}= ic+d \}|<k\cdot \log(p).$$
	
	\begin{proposition}\label{item33}
		For $p$ an odd prime and $n>1$ an integer, let $\Fq$ be a finite field with $q=p^n$ elements and let $\gamma\in\Fq\backslash\{1\}$. Then
		$$|\{1\leq i\leq q-2: \gamma^{i+1}= i(1-\gamma)+1 \}|\leq\, \frac{q}{p}+\sqrt{\frac{3p}{2}-\frac{39}{16}}+\frac{1}{4}.$$
	\end{proposition}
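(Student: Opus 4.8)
The plan is to count the solutions $i\in[1,q-2]$ of $\gamma^{i+1}=i(1-\gamma)+1$ by exploiting the two natural periodicities hidden in the two sides of the equation. The left-hand side $i\mapsto\gamma^{i+1}$ is periodic with period $l:=\ord_{\Fq}(\gamma)$, a divisor of $q-1$. The right-hand side $i\mapsto i(1-\gamma)+1$, viewed as a function of $i\bmod p$ into $\Fq$, is periodic with period $p$ (since $1-\gamma$ is a fixed field element and $i\equiv i'\pmod p$ forces $i(1-\gamma)=i'(1-\gamma)$ in characteristic $p$). Thus I am in exactly the setting of Lemma~\ref{item32} and Corollary~\ref{item31}: I want to bound the coincidences of a function of period $l$ with a function of period $p$.

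First I would reduce to blocks on which the local bound applies. Write $g_1(i)=\gamma^{i+1}$ and $g_2(i)=i(1-\gamma)+1$, so I am counting $i\in[1,q-2]$ with $g_1(i)=g_2(i)$. The key structural point is that $g_2$ is injective modulo $p$ (as $1-\gamma\neq 0$), while $g_1$ need not be injective over a full period of length $l$, but Lemma~\ref{item32} is precisely tailored to bound, on any window of length at most $p$, the number of $i$ with $\gamma^{i+1}=ic+d$. So I would partition $[1,q-2]$ into $\lceil (q-1)/p\rceil$ consecutive windows each of length at most $p$, apply Lemma~\ref{item32} (with $c=1-\gamma$, $d=1$) on each window to get at most $\sqrt{\tfrac{3p}{2}-\tfrac{39}{16}}+\tfrac54$ solutions per window, and sum. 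The number of windows is about $q/p$, which produces the leading term $q/p$; the accumulated error term from $\sqrt{\tfrac{3p}{2}-\cdots}+\tfrac54$ summed over $q/p$ windows would be far too large, so a naive window-by-window count will not give the stated bound.

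The main obstacle, and the part requiring care, is therefore to avoid paying the $O(\sqrt p)$ error in every one of the $\approx q/p$ windows. The correct device is to split the count into two regimes according to $l=\ord_{\Fq}(\gamma)$. If $l$ is small (comparable to or less than $p$), then by periodicity of $g_1$ the equation repeats with period dividing $l$, and I can use Corollary~\ref{item31}: over $l\mid n_1$ and $p\mid n_2$ with $\gcd$ handled by pulling out the common factor, the injectivity of $g_2\bmod p$ caps the coincidences by roughly $\min\{l,p\}$ per cycle, giving a contribution bounded by the full period count rather than an accumulation of square-root errors. If $l$ is large, then $g_1$ is injective (or nearly so) on long stretches and I apply Lemma~\ref{item32} to a single window plus the periodic repetition: the total range $[1,q-2]$ spans about $(q-1)/l$ periods of $g_1$, within each of which the solution set is a translate, so the error term $\sqrt{\tfrac{3p}2-\cdots}+\tfrac54$ is paid essentially once and the bulk $q/p=\tfrac{(q-1)}{l}\cdot\tfrac{l}{p}$ comes from the $\min\{l,p\}$-type counting. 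Reconciling these two regimes so that the leading term is exactly $q/p$ and the additive error collapses to the single copy $\sqrt{\tfrac{3p}{2}-\tfrac{39}{16}}+\tfrac14$ (note the stated constant is $\tfrac14$, shaved from the $\tfrac54$ of Lemma~\ref{item32}, presumably by absorbing one solution into the main term or by a parity/endpoint adjustment) is the delicate bookkeeping step. I expect the argument to hinge on choosing the window alignment so that one window realizes the sharp Lemma~\ref{item32} bound while the remaining $q/p-O(1)$ windows each contribute at most the average $1$ dictated by $\min$-counting in Corollary~\ref{item31}.
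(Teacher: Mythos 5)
Your proposal assembles the right tools (the two periodicities, Corollary \ref{item31}, Lemma \ref{item32}) and correctly diagnoses that a naive window-by-window use of Lemma \ref{item32} over-counts, but the plan as written has genuine gaps, and they sit exactly where the paper's proof does its real work. First, the governing case distinction is not ``$l$ small versus $l$ large'' but $\gamma\in\Fp$ versus $\gamma\notin\Fp$. The single payment of the $\sqrt{3p/2-39/16}+5/4$ term occurs in the case $\gamma\in\Fp$ --- where $l\mid p-1$, i.e.\ $l$ is \emph{small}, opposite to your assignment --- and it works because of an exact divisibility: $l\mid p-1$ implies $lp\mid q-p$, so the interval $[p-1,q-2]$, which contains exactly $q-p$ integers, splits into exactly $(q-p)/(lp)$ full combined periods, each contributing at most $\min\{l,p\}=l$ by Corollary \ref{item31}, totalling $q/p-1$ with \emph{no} leftover window; Lemma \ref{item32} is then applied once, to the initial segment $[1,p-2]$. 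Your hope that ``the remaining $q/p-O(1)$ windows each contribute at most the average $1$'' has no justification and is not what happens.

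Second, in the regime $\gamma\notin\Fp$ and $l<p$, your plan (Corollary \ref{item31} capping coincidences by $\min\{l,p\}$ per combined period) gives roughly $l\bigl(\lfloor (q-2)/(lp)\rfloor+1\bigr)\le q/p+l$, and $l$ can be close to $p$, so this exceeds the claimed bound. The paper closes this with an idea your proposal is missing: rewrite the equation as $1+\gamma+\cdots+\gamma^i=-i$ and observe that two \emph{consecutive} partial sums cannot both lie in $\Fp$ (otherwise $\gamma=(c_{k+1}-1)/c_k\in\Fp$), so each combined period contributes at most $\lfloor l/2\rfloor$ rather than $l$, giving a total below $\frac{q}{2p}+\frac{p}{2}\le\frac{q}{p}$. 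Third, in the regime $l>p$ (which forces $\gamma\notin\Fp$), your claim that the solution set ``is a translate'' within each period of $g_1$ is false --- the solutions only repeat with the combined period $lp$, not $l$ --- and the desired bound $p\bigl(\lfloor(q-2)/(lp)\rfloor+1\bigr)\le q/p$ is not automatic: the paper needs $l\ge p+1$ in general, $l\ge p+2$ when $n=3$ (using $(p+1)\nmid p^3-1$), and a separate check for $n=2$; Lemma \ref{item32} is not used in this regime at all. So the proposal correctly identifies the ingredients but does not constitute a proof; the three mechanisms above are the missing substance.
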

	
	\begin{proof}
		Assume that $\gamma\in\Fq\backslash\Fp$ and let $l=ord_{\Fq}(\gamma)$. In order to prove our result, we consider two cases: $l>p$ and $l<p$.
		
		We assume that $l>p$ and note that $\gamma^{i+1}$ has period $l$ and $i(1-\gamma)+1$ has period $p$. Let $f(i):=\gamma^{i+1}-i(1-\gamma)-1$. We recall that $\gcd(l,p)=1$. By Corollary \ref{item31}, the number of roots of $f(i)$ in $\big[1, lp\big\lfloor\frac{q-2}{lp}\big\rfloor\big]$ is at most $p\big\lfloor\frac{q-2}{lp}\big\rfloor$. In the interval $[lp\big\lfloor\frac{q-2}{lp}\big\rfloor+1, q-2\big]$, the number of roots of $f(i)$ is at most $p$, since $q-2-lp\big\lfloor\frac{q-2}{lp}\big\rfloor<lp$. We split the problem into the following subcases:
		\begin{itemize}
			\item Assume that $n=2$. Since $l>p$, 
			$$\left\lfloor\frac{q-2}{lp}\right\rfloor+p=\left\lfloor\frac{p^2-2}{lp}\right\rfloor+p=p=\frac{q}{p}.\\[.5em]$$
			\item Suppose that $n=3$. Since $(p+1)\nmid (p^3-1)$, then $l$ is at least $p+2$. Thus
			$$p\left\lfloor\frac{q-2}{lp}\right\rfloor+p=p\left\lfloor\frac{p^3-2}{lp}\right\rfloor+p<p\frac{p^3-2}{lp}+p\leq \frac{p^3-2}{p+2}+p\leq p^2 = \frac{p^3}{p}=\frac{q}{p}.\\[.5em]$$
			\item Assume that $n\geq4$. Since $l\geq p+1$, we have that
			$$p\left\lfloor\frac{q-2}{lp}\right\rfloor+p=p\left\lfloor\frac{p^n-2}{lp}\right\rfloor+p<p\frac{p^n-2}{lp}+p\leq \frac{p^n-2}{p+1}+p\leq p^{n-1} = \frac{p^n}{p}=\frac{q}{p}.\\[.5em]$$
		\end{itemize}
		
		Therefore our result is proved for $l>p$. Now, we assume $l<p$ and observe that $\gamma^{i+1}= i(1-\gamma)+1$ is the same as $\gamma^i+\ldots+\gamma+1=-i$. We define $f(i)=\sum_{j=0}^{i}\gamma^j$ and $g(i)=-i$. By Lemma \ref{item69} we have that
		$$\begin{aligned}
		|\{i\in[1, lp]: f(i)=g(i)\}| &= |\{f(i):0\leq i \leq l \}\cap \{g(i):0\leq i \leq p \}|\\
		&= |\{f(i):0\leq i \leq l \}\cap \{-i:0\leq i \leq p \}|\\
		&=|\{f(i):0\leq i \leq l \}\cap \Fp|.
		\end{aligned}$$
		
		If   $k$ is a value  such that $0\leq k \leq l-1$ and  $f(k),f(k+1)\in \Fp$, then
		\begin{equation}\label{equa53}
		f(k)=\gamma^k+\ldots+\gamma+1=c_k\in\Fp;
		\end{equation}
		\begin{equation}\label{equa54}
		f(k+1)=\gamma^{k+1}+\ldots+\gamma+1=c_{k+1}\in\Fp.
		\end{equation}
		
		Since $c_k\neq0$ and $k<l=ord_{\Fq}(\gamma)$, the Equations \eqref{equa53} and \eqref{equa54} imply that
		$$\gamma=\frac{c_{k+1}-1}{c_k}\in\Fp,$$
		which is a contradiction. Therefore, if $f(k)\in
\Fp$, then $f(k+1)\notin
\Fp$. Then we have an upper bound for the number of elements in $\{f(i):0\leq i \leq l \}\cap \Fp$ given by
		$$|\{i\in[1, lp]: f(i)=g(i)\}|=|\{f(i):0\leq i \leq l \}\cap \Fp|\leq\bigg\lfloor \frac{l}{2} \bigg\rfloor.$$
		
		Therefore the number of roots of $f(i)-g(i)$ in $\big[1, lp\big\lfloor\frac{q-2}{lp}\big\rfloor\big]$ is at most $\big\lfloor\tfrac{l}{2}\big\rfloor\big\lfloor\frac{q-2}{lp}\big\rfloor$ and the number of roots in $[lp\big\lfloor\frac{q-2}{lp}\big\rfloor+1, q-2\big]$ is at most $\big\lfloor\tfrac{l}{2}\big\rfloor$ since $q-2-lp\big\lfloor\frac{q-2}{lp}\big\rfloor<lp$. Then
		$$|\{1\leq i\leq q-2: \gamma^{i+1}= i(1-\gamma)+1 \}|\leq\bigg\lfloor\frac{l}{2}\bigg\rfloor\bigg\lfloor\frac{q-2}{lp}\bigg\rfloor+\bigg\lfloor\frac{l}{2}\bigg\rfloor<\frac{q}{2p}+\frac{p}{2}\leq\,\frac{q}{p},$$
	and the result is proved in the case where $\gamma\in\Fq\backslash\Fp$.
	
	Assume that $\gamma\in\Fp$. We have that $l:=ord_{\Fq}(\gamma)$ divides $p-1$ and then, by Corollary \ref{item31}, the number of elements $i\in[p-1,\,q-2]$ for which $\gamma^{i+1}= i(1-\gamma)+1$ is at most $\tfrac{q}{p}-1$. It follows from Lemma \ref{item32} that the number of solutions $i\in[1,p-2]$ of the equation $\gamma^{i+1}= i(1-\gamma)+1$ is bounded by
		$$\sqrt{\frac{3p}{2}-\frac{39}{16}}+\frac{5}{4},$$
	and then our result follows. $\hfill\qed$ 
	\end{proof}

	\section{The Main Results}
	
For polynomials with Carlitz rank $1$, the weight of $f$ is well determined as it is shown in the following proposition.

\begin{proposition}\label{item24}
	Let $\Fq$ be a finite field with odd characteristic $p$ and let $f$ be a permutation polynomial of $\Fq$ with $Crk(f)=1$. Then 
	$$ \omega(f)\in\{1,2,q-\tfrac{q}{p},q-\tfrac{q}{p}-1\}.$$
\end{proposition}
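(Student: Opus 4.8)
The plan is to use the fact that $Crk(f)=1$ forces $f$ to have a representation $f(x)\equiv(a_0x+a_1)^{q-2}+a_2\pmod{x^q-x}$ with $a_0\neq0$, and to read off the weight directly from the coefficients of this polynomial. First I would observe that the right-hand side is already a polynomial of degree $q-2<q$, so it is its own reduction modulo $x^q-x$ and hence coincides with the degree-$(<q)$ representative of $f$. Expanding by the binomial theorem and applying Lucas' congruence exactly as in the proof of Lemma~\ref{item30} (that is, $\binom{q-2}{i}\equiv(i+1)(-1)^i\pmod p$), the coefficient of $x^i$ for $1\le i\le q-2$ becomes $(i+1)(-1)^i a_0^i a_1^{q-2-i}$, while the constant coefficient equals $a_1^{q-2}+a_2$.

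Next I would split into two cases according to whether $a_1$ vanishes. If $a_1=0$, the polynomial collapses to $a_0^{q-2}x^{q-2}+a_2$, whose weight is $2$ when $a_2\neq0$ and $1$ when $a_2=0$; this accounts for the values $1$ and $2$. If $a_1\neq0$, then $a_0^i a_1^{q-2-i}\neq0$ for every $i$, so the coefficient of $x^i$ with $1\le i\le q-2$ vanishes precisely when $(i+1)\equiv0\pmod p$, i.e. when $i\equiv-1\pmod p$.

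The heart of the argument is then a counting step. The indices $i\in[1,q-2]$ with $i\equiv-1\pmod p$ are exactly $i=p-1,\,2p-1,\,\dots,\,(q/p-1)p-1$, so there are $q/p-1$ of them; the remaining solution $i=q-1$ of $i\equiv-1\pmod p$ lies outside the range and must be discarded. Consequently the monomials $x^1,\dots,x^{q-2}$ contribute exactly $(q-2)-(q/p-1)=q-1-q/p$ nonzero coefficients. Finally I would treat the constant term $a_1^{q-2}+a_2=a_1^{-1}+a_2$ separately: it is nonzero iff $a_2\neq-a_1^{-1}$, adding $1$ to the weight in that case and $0$ otherwise. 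Hence when $a_1\neq0$ one obtains $\omega(f)=q-q/p$ or $\omega(f)=q-q/p-1$, and combining with the case $a_1=0$ gives $\omega(f)\in\{1,2,\,q-q/p,\,q-q/p-1\}$.

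I do not anticipate a serious obstacle: no cancellation can occur between distinct monomials $x^i$ with $i\ge1$, since each arises from a single binomial term, so the only merging is of the $i=0$ term with $a_2$. The one point that genuinely requires care is confirming that exactly $q/p-1$ (rather than $q/p$) of the coefficients with $i\in[1,q-2]$ vanish, which hinges precisely on the index $i=q-1$ being excluded from the range $[1,q-2]$.
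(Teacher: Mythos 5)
Your proof is correct and follows essentially the same route as the paper's: expand $(a_0x+a_1)^{q-2}+a_2$ by the binomial theorem, apply Lucas' congruence $\binom{q-2}{i}\equiv(i+1)(-1)^i\pmod p$, split on $a_1=0$ versus $a_1\neq0$, and count the $q/p-1$ indices $i\in[1,q-2]$ with $i\equiv-1\pmod p$, treating the constant term separately. The only (welcome) addition is your explicit remark that the representative already has degree $q-2<q$, a point the paper leaves implicit.
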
 

\begin{proof}
	Since $Crk(f)=1$, there exist $a_0\in\Fq^*$ and $a_1,a_2\in\Fq$ such that
	$$f(x)=(a_0x+a_1)^{q-2}+a_2.$$
	From the Binomial Theorem we have that
	\begin{equation}\label{item94}
	(a_0x+a_1)^{q-2}+a_2=a_2+\sum_{i=0}^{q-2}\binom{q-2}{i}a_0^i a_1^{q-2-i}x^i.
	\end{equation}
	If $a_1=0$, it follows that $\omega(f)=1$ if $a_2=0$ and $\omega(f)=2$ if $a_2\neq0$. If $a_1\neq0$, then from the Equation~\eqref{item94} it follows that
	$$(a_0x+a_1)^{q-2}+a_2=a_2+a_1^{q-2}\sum_{i=0}^{q-2}\binom{q-2}{i}\left(a_0^{-1} a_1\right)^{-i}x^i.$$
	and therefore
	$$\omega(f)=\begin{cases}
	q-s-1,&a_2\neq -a_1^{q-2};\\
	q-s-2,&a_2=-a_1^{q-2},\\
	\end{cases}$$
	where $s$ is the number of $i$ with $1\leq i\leq q-2$ such that
	$$\binom{q-2}{i}\equiv 0\pmod{p}.$$
	From Lucas' congruence it follows that
	$$\binom{q-2}{i}\equiv (i+1)(-1)^i\pmod{p}.$$
	Therefore $s=\tfrac{q}{p}-1$ and then our result follows.$\hfill\qed$ 
\end{proof}

\begin{remark}
	The cases where $Crk(f)=1$ and $\omega(f)\in\{1,2\}$ are not considered in Theorem \ref{item90}. Indeed, for the remaining cases, Proposition \ref{item24} asserts that $\omega\ge q-\tfrac{q}{p}-1$ if $Crk(f)=1$, which is better than the bound $q/2-2$ provided by Theorem \ref{item90}. 
\end{remark}
	
In the case where $f$ is a polynomial with Carlitz rank is $2$, Theorem \ref{item90} implies that
	$$\omega(f)>\frac{q}{3}-2.$$
	
	Our main result improves this lower bound in the case where the Carlitz rank of $f$ is $2$. 
	
	\begin{theorem}\label{item34} Let $\Fq$ be a finite field with odd characteristic $p$ and let $f$ be a permutation polynomial of $\Fq$ with $Crk(f)=2$. Then
		$$ \omega(f)\geq q-\frac{q}{p}-\sqrt{\frac{3p}{2}-\frac{39}{16}}+\frac{1}{4}.$$
		
	\end{theorem}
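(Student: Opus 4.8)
The plan is to read the weight off directly from the representation in Lemma~\ref{item30}. Normalizing $a_0=-1$ and $c=0$ as permitted by Remark~\ref{item40}, the reduced representative of $f$ is $a_2^{-1}\sum_{i=1}^{q-2}x^i B_i$ with $B_i=(a_1-ia_2^{-1})(a_1+a_2^{-1})^{q-2-i}-a_1^{q-1-i}$; since $a_2^{-1}\neq 0$, the coefficient of $x^i$ vanishes precisely when $B_i=0$, i.e.\ when \eqref{item17} holds. Hence $\omega(f)=(q-2)-N$, where $N$ is the number of indices $i\in[1,q-2]$ solving \eqref{item17}, and the whole problem reduces to bounding $N$ from above.

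First I would dispose of the two degenerate configurations. If $a_1=0$ then \eqref{item17} collapses to $-i\,(a_2^{-1})^{q-1-i}=0$, which holds exactly when $p\mid i$, giving $N=q/p-1$; if $a_1+a_2^{-1}=0$ then for $i\le q-3$ the bracket equals the nonzero quantity $-a_1^{q-1-i}$, and the single index $i=q-2$ is checked by hand (it gives $2a_2^{-1}\neq 0$ since $p$ is odd), so $N=0$. In both cases $\omega(f)$ already exceeds the asserted bound, as a one-line estimate of $\sqrt{3p/2-39/16}$ against $5/4$ confirms for $p\ge 3$.

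The substantial case is $a_1\neq 0\neq a_1+a_2^{-1}$. Here every power appearing in \eqref{item17} has a nonzero base, so I may use $u^{q-1}=1$ to cancel the exponents $q-2-i$ and $q-1-i$ and rewrite \eqref{item17} as \eqref{equa597}. Setting $\gamma=(a_1+a_2^{-1})/a_1\in\Fq\setminus\{0,1\}$ and using the identity $a_2^{-1}/a_1=\gamma-1$, equation \eqref{equa597} becomes exactly $\gamma^{i+1}=i(1-\gamma)+1$. Thus $N$ is precisely the quantity estimated in Proposition~\ref{item33}, which for $q=p^n$ with $n\ge 2$ gives $N\le q/p+\sqrt{3p/2-39/16}+1/4$; for the excluded case $q=p$ one instead applies Lemma~\ref{item32} directly on the interval $[1,p-2]$, where $\gamma\in\Fp\setminus\{0,1\}$ is automatic and the hypotheses $c=1-\gamma\neq 0$ and $3\le M\le p$ hold.

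Substituting the bound on $N$ into $\omega(f)=(q-2)-N$ and collecting the constants then yields the stated inequality. I expect the main obstacles to be organizational rather than deep: verifying the exact algebraic equivalence of \eqref{item17} with $\gamma^{i+1}=i(1-\gamma)+1$ (including the $0^0$ bookkeeping at the endpoint $i=q-2$ in the degenerate case), and handling $q=p$ separately since Proposition~\ref{item33} is stated only for $n>1$, together with the finitely many small primes for which the hypothesis $3\le M\le p$ of Lemma~\ref{item32} must be inspected directly. The counting heart of the argument is already encapsulated in Proposition~\ref{item33} and Lemma~\ref{item32}, so no genuinely new estimate is required.
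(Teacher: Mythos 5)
Your proposal retraces the paper's own proof: the same normalization via Lemma~\ref{item30} and Remark~\ref{item40}, the same three-way case split on $a_1$ and $a_1+a_2^{-1}$, and the same reduction of the main case to counting solutions of $\gamma^{i+1}=i(1-\gamma)+1$, delegated to Proposition~\ref{item33}. Your handling of the degenerate cases is correct (your $0^0$ bookkeeping at $i=q-2$ is in fact more careful than the paper's), and you are right that Proposition~\ref{item33} excludes $q=p$, a point the paper silently glosses over.

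The genuine gap is precisely the step you describe as ``collecting the constants'': it does not yield the stated inequality, and cannot. Proposition~\ref{item33} gives $N\le \frac{q}{p}+\sqrt{\frac{3p}{2}-\frac{39}{16}}+\frac{1}{4}$, so
\[
\omega(f)=(q-2)-N\;\ge\; q-\frac{q}{p}-\sqrt{\frac{3p}{2}-\frac{39}{16}}-\frac{9}{4},
\]
which falls short of the claimed bound by exactly $\frac{5}{2}$. The same deficit occurs in your $q=p$ patch: Lemma~\ref{item32} gives $N\le\sqrt{3p/2-39/16}+\frac{5}{4}$, hence $\omega(f)\ge q-\sqrt{3p/2-39/16}-\frac{13}{4}$, versus the claimed $q-\sqrt{3p/2-39/16}-\frac{3}{4}$ for $q=p$. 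Moreover the shortfall is not repairable, because the statement as printed is false: for $p=11$ one has $\sqrt{3\cdot 11/2-39/16}=\frac{15}{4}$ exactly, so the claimed bound reads $\omega\ge 11^n-11^{n-1}-\frac{7}{2}$, while the paper's own example exhibits permutation polynomials $f_n$ over $\F_{11^n}$ with $Crk(f_n)=2$ and $\omega(f_n)=11^n-11^{n-1}-4$ (realized by $\gamma=7$, for which $\gamma^{i+1}=i(1-\gamma)+1$ has the three solutions $i=4,5,8$ in $[1,9]$). To be clear, this defect is inherited from the paper: its proof of Theorem~\ref{item34} likewise establishes only the $-\frac{9}{4}$ bound (equivalently, Corollary~\ref{item89} combined with $\nu_p\le\sqrt{3p/2-39/16}+\frac{5}{4}$), so the theorem's constant $+\frac{1}{4}$ is an overstatement by $\frac{5}{2}$. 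Your write-up is thus faithful to the paper's argument, including its error; what both arguments actually prove is the inequality with $-\frac{9}{4}$ in place of $+\frac{1}{4}$, and with that corrected constant your $q=p$ patch does close the case the paper omits, since $q-\frac{q}{p}-\sqrt{3p/2-39/16}-\frac{9}{4}$ equals $q-\sqrt{3p/2-39/16}-\frac{13}{4}$ when $q=p$.
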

	
	\begin{proof}
		By Lemma \ref{item30} and Remarks \ref{item40} and \ref{item39}, we can assume that $f$ is given by
		\begin{equation}\label{equa60}
		f(x)=a_2^{-1}\sum\limits_{i=1}^{q-2}{x^i\big[(a_1-ia_2^{-1})(a_1+a_2^{-1})^{q-2-i} - a_1^{q-1-i}\big]},
		\end{equation}
		where $a_1\in\Fq$ and $a_2\in\Fq^*$. We split the proof into the following cases:
		
		\begin{enumerate}[(a)]
			\item If $a_1=0$, from Equation \ref{equa60} it follows that
			$$ f(x)= a_2^{-1}\sum\limits_{i=1}^{q-2}{x^i\big[-i(a_2^{-1})^{q-1-i}\big]}=-a_2^{-1}\sum\limits_{i=1}^{q-2}{i x^i a_2^i}.$$
			We observe that $i\,a_2^{i}=0$ if and only if $i\equiv 0\pmod{p}$. Therefore
			$$ \omega(f)= q-2-\bigg(\frac{q}{p}-1\bigg) = q-\frac{q}{p}-1.$$
			\item Assume that $a_1+a_2^{-1}=0$ and $a_1\neq 0$. In this case $f$ can be rewritten as
			$$ f(x)= -a_2^{-1}\sum\limits_{i=1}^{q-2}{x^i\, a_1^{q-1-i}}.$$
			and therefore $\omega(f) = q-2$.
			\item Suppose that $a_1\neq 0$ and $a_1+a_2^{-1}\neq 0$. In this case,
			$$\begin{aligned} f(x)&=a_2^{-1}\sum\limits_{i=1}^{q-2}{x^i\big[(a_1-ia_2^{-1})(a_1+a_2^{-1})^{q-2-i} - a_1^{q-1-i}\big]}\\[.5em]
			&=a_2^{-1}\sum\limits_{i=1}^{q-2}{x^i\big[(a_1-ia_2^{-1})(a_1+a_2^{-1})^{-(i+1)} - a_1^{-i}\big]}\\[.5em]
			&=a_2^{-1}\sum\limits_{i=1}^{q-2}{x^ia_1(a_1+a_2^{-1})^{-(i+1)}\left[1-i\left(\dfrac{a_1+a_2^{-1}}{a_1}-1\right) - \left(\dfrac{a_1+a_2^{-1}}{a_1}\right)^{i+1}\right]}.\\[.5em]
			\end{aligned}$$
			
			Let $\gamma := \frac{a_1+a_2^{-1}}{a_1}$. By hypothesis, we have that $\gamma\not\in\{0,1\}$ and then we only need to compute the number of solutions of the equation
			$$\gamma^{i+1}= i(1-\gamma)+1\text{ with }i\in[1,q-2].$$
			Therefore our result follows from Preposition \ref{item33}.$\hfill\qed$
		\end{enumerate}
		
	\end{proof}
	
\begin{corollary}\label{item89}  
	Let $p$ be an odd prime and let $\Fq$ be a finite field with characteristic $p$. Set 
	\begin{equation}\label{eq05}
	\nu_p=\max\limits_{\gamma\in\Fp\backslash\{1\}}|\{1\leq i\leq p-2: \gamma^{i+1}= i(1-\gamma)+1 \}|.
	\end{equation}
	 If $f$ is a permutation polynomial of $\Fq$ with $Crk(f)=2$, then
	$$ \omega(f)\geq q-\frac qp-1-\nu_p.$$	
	Furthermore, $0\leq \nu_p\leq \sqrt{3p/2-39/16}+5/4$.
\end{corollary}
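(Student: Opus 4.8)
The plan is to re-run the case analysis from the proof of Theorem~\ref{item34}, but to record the number of vanishing coefficients of $f$ through $\nu_p$ rather than through the coarser estimate of Lemma~\ref{item32}. After reducing $f$ to the form \eqref{equa60} by Remarks~\ref{item40} and \ref{item39}, the two degenerate cases $a_1=0$ and $a_1+a_2^{-1}=0$ give respectively $\omega(f)=q-q/p-1$ and $\omega(f)=q-2$; since $\nu_p\ge 0$ and $q/p\ge 1$, both already satisfy $\omega(f)\ge q-q/p-1-\nu_p$. In the remaining case one puts $\gamma=(a_1+a_2^{-1})/a_1\in\Fq\setminus\{0,1\}$ and gets $\omega(f)=(q-2)-s$, where $s=|\{1\le i\le q-2:\ \gamma^{i+1}=i(1-\gamma)+1\}|$. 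Thus the whole statement reduces to the single inequality $s\le q/p-1+\nu_p$.

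First I would settle the prime-field case $q=p$: here $\gamma\in\Fp\setminus\{1\}$, so $s$ is literally one of the quantities maximised in \eqref{eq05}, whence $s\le\nu_p=q/p-1+\nu_p$. (This in particular covers $q=p$, to which Proposition~\ref{item33} --- and hence the nondegenerate branch of the proof of Theorem~\ref{item34} --- does not directly apply.) Next, for $q=p^n$ with $n>1$ and $\gamma\in\Fp$, I would split the range as $[1,q-2]=[1,p-2]\cup[p-1,q-2]$. The first block contributes at most $\nu_p$ by the very definition of $\nu_p$, while in the second block $\ord_{\Fq}(\gamma)$ divides $p-1$, so Corollary~\ref{item31} (exactly as in the $\gamma\in\Fp$ part of Proposition~\ref{item33}) bounds its contribution by $q/p-1$. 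Adding the two gives $s\le q/p-1+\nu_p$.

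It then remains to treat $\gamma\in\Fq\setminus\Fp$ with $n>1$, and here lies the main obstacle. Running through the subcases inside the proof of Proposition~\ref{item33} one finds $s<q/p$, hence $s\le q/p-1$, in every situation except $n=2$ with $\ord_{\Fq}(\gamma)>p$, where only $s\le q/p$ is available. Since $q/p-1\le q/p-1+\nu_p$ always and $q/p\le q/p-1+\nu_p$ as soon as $\nu_p\ge 1$, it suffices to know $\nu_p\ge 1$; taking $\gamma=-2$ and $i=1$ gives a solution, so $\nu_p\ge 1$ for every $p\ge 5$. The only field then left uncovered is $\F_9$, where $\nu_3=0$ and the exceptional subcase can occur. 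The source of the trouble is that for $q=p^2$ the window $[1,p^2-2]$ is strictly shorter than one full period $\ord_{\Fq}(\gamma)\cdot p$, so Corollary~\ref{item31} overshoots by exactly one; a period-level count via Lemma~\ref{item69} shows that over a full period the number of solutions equals $|\langle\gamma\rangle\cap(1+(1-\gamma)\Fp)|\le p$, and to recover the missing unit one must check that at least one solution lies outside $[1,p^2-2]$. As only $p=3$ is at stake, I would confirm $s\le 2=q/p-1$ by a direct verification over the six elements $\gamma\in\F_9\setminus\F_3$.

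Finally, for the ``Furthermore'' assertion, $\nu_p\ge 0$ is immediate. For the upper bound I would fix an arbitrary $\gamma\in\Fp\setminus\{1\}$ and apply Lemma~\ref{item32} with $c=1-\gamma\ne 0$, $d=1$, $L=1$ and $M=\max\{3,p-3\}$; this $M$ satisfies $3\le M\le p$ and $[1,p-2]\subseteq[1,1+M]$, so the number of $i\in[1,p-2]$ with $\gamma^{i+1}=i(1-\gamma)+1$ is at most $\sqrt{3M/2-39/16}+5/4\le\sqrt{3p/2-39/16}+5/4$. Since this bound is uniform in $\gamma$, it also bounds the maximum $\nu_p$, which completes the proof.
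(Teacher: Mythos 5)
Your proof is correct, and its skeleton is exactly the one the paper intends: Corollary \ref{item89} is stated without an explicit proof, to be read off from the case analysis of Theorem \ref{item34} together with the counting inside Proposition \ref{item33} (cases (a), (b) are immediate, and case (c) reduces to $s\le q/p-1+\nu_p$, where $s$ is the number of solutions of $\gamma^{i+1}=i(1-\gamma)+1$ in $[1,q-2]$). The real value of your write-up is that this implicit derivation is not complete as it stands, and you correctly locate and repair the places where it fails. First, $q=p$ lies outside the hypotheses of Proposition \ref{item33}, and you dispose of it directly from the definition \eqref{eq05}. Second, and more substantially, for $\gamma\in\Fq\setminus\Fp$ the proof of Proposition \ref{item33} yields $s<q/p$ (hence $s\le q/p-1$ by integrality) in every subcase except $n=2$ with $\ord_{\Fq}(\gamma)>p$, where it only yields $s\le q/p$; that weaker bound suffices exactly when $\nu_p\ge 1$, which your example $\gamma=-2$, $i=1$ (giving $\gamma^2=4=1\cdot(1-\gamma)+1$) establishes for all $p\ge 5$, leaving only $\F_9$, where $\nu_3=0$ and every $\gamma\in\F_9\setminus\F_3$ has order $4$ or $8$, hence order greater than $p$. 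Your proposed finite verification there does succeed: writing $\F_9=\F_3(\alpha)$ with $\alpha^2=\alpha+1$, the equation $\gamma^{i+1}=i(1-\gamma)+1$ has exactly one solution $i\in[1,7]$ when $\gamma$ has order $4$ (namely $i=3$, where both sides equal $1$) and no solution when $\gamma$ has order $8$, so $s\le 1\le q/p-1$ and the corollary holds over $\F_9$ as well. Finally, your treatment of the bound $\nu_p\le\sqrt{3p/2-39/16}+5/4$ via Lemma \ref{item32} with $L=1$, $M=\max\{3,p-3\}$ is correct, and is more careful about the hypothesis $3\le M\le p$ than the corresponding sentence in the proof of Proposition \ref{item33}. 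In short: same route as the paper, executed with the rigor the paper's one-line justification omits.
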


\begin{remark}
	It follows from the proof of Theorem~\ref{item34} that the inequality in Corollary~\ref{item89} is sharp, i.e. for all $n\in\Z_{+}^*$ there exists a permutation polynomial $f(x)\in\Fpn[x]$ with $Crk(f)=2$ and $\omega(f)= p^n-p^{n-1}-1-\nu_p$.
\end{remark}

	From here, an open question is compute the exact value of $\nu_p$. In fact, this kind of question is interesting from the cryptography point of view and it have been studied in \cite{coppersmith2000polynomial,winterhof2002polynomial}.
	
	\begin{example}
		It is easy to verify that $\nu_{11}=3$, where the maximum value in \eqref{eq05} is reached by $\gamma=7$.  For each positive integer $n$, we set
		$$f_n(x)=\sum\limits_{i=1}^{11^n-2}{\big[4^{i+1}(2-i) - 6^i\big]x^i}.$$
We note that $f_n$ is a permutation polynomial with Carlitz rank $2$ in $\F_{11^n}$ since  $f_n$ has been chosen using Equation \eqref{equa60} and the fact that $\gamma=\frac{a_1+a_2^{-1}}{a_1}$. The polynomial $f_n$ can also be seen as
		$$f_n(x)\equiv((2-x)^{11^n-2}+1)^{11^n-2}-8\pmod{x^{11^n}-x}.$$
		By the proof of Theorem \ref{item34} we know that $\omega(f_n)=11^n-11^{n-1}-4$. In addition, any permutation polynomial $g(x)$ with Carlitz rank $2$ over $\F_{11^n}$ satisfies $\omega(g)\geq 11^n-11^{n-1}-4.$
	\end{example}

\section{Acknowledgments}

We are very grateful to the anonymous  referees for careful reading of the paper and valuable
suggestions and comments. This study was financed in part by the Coordenação de Aperfeiçoamento de Pessoal de Nível Superior - Brasil (CAPES) - Finance Code 001. 
	
\bibliographystyle{siam}
\bibliography{biblio}
	
\end{document}